\theoremstyle{plain}
\newtheorem{theorem}{Theorem}[section]
\newtheorem{lemma}[theorem]{Lemma}
\newtheorem{proposition}[theorem]{Proposition}
\theoremstyle{definition}
\newtheorem{definition}[theorem]{Definition}
\newtheorem{example}[theorem]{Example}
\theoremstyle{remark}
\title{New formulas for $\pi$ involving infinite nested square roots and Gray code.}
\author[Pierluigi Vellucci, Alberto Maria Bersani]{Pierluigi Vellucci $^*$
   \and Alberto Maria Bersani$^{**}$}
\begin{document}
\maketitle

\begin{abstract}
In previous papers we introduced a class of polynomials which follow the same recursive formula as the Lucas-Lehmer numbers, studying the distribution of their zeros and remarking that this distribution follows a sequence related to the binary Gray code. It allowed us to give an order for all the zeros of every polynomial $L_n$. In this paper, the zeros, expressed in terms of nested radicals, are used to obtain two formulas for $\pi$: the first can be seen as a generalization of the known formula
$$\pi=\lim_{n\rightarrow \infty} 2^{n+1}\cdot \sqrt{2-\underbrace{\sqrt{2+\sqrt{2+\sqrt{2+...+\sqrt{2}}}}}_{n}} \ ,$$
related to the smallest positive zero of $L_n$; the second is an exact formula for $\pi$ achieved thanks to some identities valid for $L_n$.
\end{abstract}

\section{Introduction.}

In this{\let\thefootnote\relax\footnotetext{Keywords: $\pi$ formulas; Gray code; Continued roots; Nested square roots; zeros of Chebyshev polynomials. 2010 MSC: 40A05, 11Y60.}} paper{\let\thefootnote\relax\footnotetext{$^*$Department of Economics, University of Roma TRE, via Silvio D'Amico 77, 00145 Rome, Italy (pierluigi.vellucci@uniroma3.it). $^{**}$ Department of Mechanical and Aerospace Engineering, Sapienza University, Via Eudossiana n. 18, 00184 Rome, Italy (alberto.bersani@sbai.uniroma1.it).}} we obtain $\pi$ as the limit of a sequence related to the zeros of the class of polynomials $L_n(x) = L_{n-1}^2(x) - 2$ created by means of the same iterative formula used to build the well-known Lucas-Lehmer sequence, employed in primality tests \cite{Bres89,Fin03,Kos01,Leh30,Luc878a,Luc878b,Luc878c,Rib88}. This class of polynomials was introduced in previous papers \cite{Vel}, \cite{VelBer}.

The results obtained here are based on the placement of the zeros of the polynomials $L_n(x)$, studied in \cite{Vel2}, and generalize the well-known relation
\begin{equation}
\label{eq:Munkhammar}
\pi=\lim_{n\rightarrow \infty} 2^{n+1}\cdot \sqrt{2-\underbrace{\sqrt{2+\sqrt{2+\sqrt{2+...+\sqrt{2}}}}}_{n}} \ .
\end{equation}
Zeros have a structure of nested radicals, by means of which we can build infinite sequences of nested radicals converging to $\pi$. The ordering of the zeros follows the sequence of the binary Gray code, which is very useful in computer science and in telecommunications \cite{Vel2}.

Starting from the seminal papers by Ramanujan (\cite{Rama}, \cite{Ber} pp. 108-112), there is a vast literature studying the properties of the so-called continued radicals as, for example: \cite{Her,BordeB,Siz,Joh,Eft2,Ly}. Other authors investigated the properties of more general continued operations and their convergence. For a nice review of these results, see for example \cite{Jones}, which focuses mainly on continued reciprocal roots.

The nested square roots of 2 are a special case of the wide class of continued radicals. They have been studied by several authors. In particular, Cipolla \cite{Cipolla} obtained a very elegant formula for
$$\sqrt{2+i_{n}\sqrt{2+i_{n-1}\sqrt{2+\cdots+i_1\sqrt{2}}}}\, ,$$
in terms of $2 \cos\left( k_n \frac{\pi}{2^{n+2}}\right)$, where $i_k\in\{-1,1\}$ and $k_n$ is a constant depending on $i_1$, $\dots$, $i_n$. %A rigorous treatment of continued radicals of arbitrary, nonnegative terms was found, probably for the first time, in a problem proposed by P\'{o}lya \cite{Pol}, solved by Szeg\"{o} at a later time and included in their famous problem book \cite{PolSzeg}.

Servi \cite{Ser03} rediscovered and extended Cipolla's formula, tying the evaluation of nested square roots of the form
\begin{equation}
\label{eq:servi}
R(b_k,...,b_1)=\frac{b_k}{2}\sqrt{2+b_{k-1}\sqrt{2+b_{k-2}\sqrt{2+...+b_2\sqrt{2+2\sin\left(\frac{b_1 \pi}{4}\right)}}}}
\end{equation}
where $b_i\in\{-1,0,1\}$ for $i\neq 1$, to expression
\begin{equation}
\label{eq:servi2}
\left(\frac{1}{2}-\frac{b_k}{4}-\frac{b_k b_{k-1}}{8}-...-\frac{b_k b_{k-1} ... b_1}{2^{k+1}}\right)\pi
\end{equation}
to obtain, amongst other results, some nested square roots representations of $\pi$:
\begin{equation}
\label{eq:servi3}
\pi=\lim_{k\rightarrow\infty} \left[\frac{2^{k+1}}{2-b_1}R\left(\underbrace{1,-1,1,1,\dots,1,1,b_1}_{k\ \text{terms}}\right)\right]
\end{equation}
where $b_1\neq 2$. Nyblom \cite{Nyb}, citing Servi's work, derived a closed-form expression for (\ref{eq:servi}) with a generic $x\geq2$ that replaces $2\sin\left(\frac{b_1 \pi}{4}\right)$ in (\ref{eq:servi}).
%Some of the results described in (\ref{eq:servi3}) were already known in the Sixth century, thanks to Aryabhata, the famous Indian mathematician and astronomer; see, for example \cite{Zhu04}.
Efthimiou \cite{Eft} proved that the radicals
$$a_0 \sqrt{2+a_1\sqrt{2+a_2\sqrt{2+a_3\sqrt{2+\cdots}}}}, \ \ a_i\in\{-1,1\}$$
have limits two times the fixed points of the Chebyshev polynomials $T_{2^n}(x)$, unveiling an interesting relation between these topics. The previous formula is equivalent to \begin{equation}
\label{eq:prop2a}
\pm  \sqrt{2\pm\sqrt{2\pm\sqrt{2\pm\sqrt{2\pm...\pm\sqrt{2}}}}}\, .
\end{equation}
In \cite{More,More2}, the authors report a relation between the nested square roots of depth $n$ as $\pm  \sqrt{2\pm\sqrt{2\pm\sqrt{2\pm\cdots\pm\sqrt{2+2 z}}}}$, $z\in\mathbb C$ and the Chebyshev polynomials of degree $2^n$ in a complex variable, generalizing and unifying Servi and Nyblom's formulas. In \cite{More}, the authors propose an ordering of the continued roots
\begin{equation}\label{eq:morenoradice}
b_k\sqrt{2+b_{k-1}\sqrt{2+\cdots+b_1\sqrt{2+2\xi}}}\ ,
\end{equation}
where $\xi=1$ and each $b_i$ is either $1$ or $-1$, according to formula
\begin{equation}\label{eq:moreno}
j(b_k,\dots,b_1)=\frac{1}{2}\left(2^k-\left(\sum_{j=1}^k\left(2^{k-j}\prod_{i=0}^{j-1} b_{k-i}\right)\right) + 1\right),
\end{equation}
for each positive integer $k$. Formula (\ref{eq:morenoradice}) expresses the nested square roots of 2 in (\ref{eq:prop2a}), and in \cite{Vel2} we gave an alternative ordering for them involving the so-called Gray code which, to the best of our knowledge, is applied for the first time to these topics.

Actually, there is a strong connection between \cite{More} and \cite{Vel2}. From (\ref{eq:moreno}), we have, for example: $j(1,1,1) =1$, $j(1,1,-1) =2$, $j(1,-1,-1) =3$, $j(1,-1,1)  =4$, $j(-1,-1,1)  =5$, j(-1,-1,-1)  =6, $j(-1,1,-1)  =7$ and $j(-1,1,1)  =8$. If we associate bit $0$ to number $b_i=1$ and bit $1$ to number $b_i=-1$, in the expression of index $j$, we obtain
\begin{align*}
(1,1,1) & \ \ \mapsto \ \ (0,0,0) \\
(1,1,-1)& \ \ \mapsto \ \ (0,0,1) \\
(1,-1,-1)& \ \ \mapsto \ \ (0,1,1) \\
(1,-1,1)& \ \ \mapsto \ \ (0,1,0) \\
(-1,-1,1)& \ \ \mapsto \ \ (1,1,0) \\
(-1,-1,-1)& \ \ \mapsto \ \ (1,1,1) \\
(-1,1,-1)&  \ \ \mapsto \ \ (1,0,1) \\
(-1,1,1)& \ \ \mapsto \ \ (1,0,0) \ ,
\end{align*}
which are just an example of Gray code.

%Formulas for $\pi$ which are based on nested radicals have forever received much attention from mathematicians because of their inherent elegance. We give below a short discussion of the $\pi$ formulas and nested radicals. For this purpose we refer to some comprehensive reviews: \cite{Bergg13}, \cite{Fin03} and \cite{Gui10}, as well as \cite{Math}.

In this paper, after having recalled in Section \ref{sec:prelim} the most important definitions and properties of the Lucas-Lehmer polynomials, in Section \ref{sec:result} we extend formula (\ref{eq:servi3}), using the properties of the zeros of these polynomials (shown in \cite{Vel2}), stating and proving Theorem \ref{th:gray}, which produces infinite numerical sequences converging to $\pi$. Besides, under suitable assumptions, Proposition \ref{prop:terza} simplifies the expression of (\ref{eq:servi2}) listed in Servi's Theorem (\cite{Ser03}, formula (8)).

We also show that the generalizations of the Lucas-Lehmer map, $M_n^a$ for $a>0$ introduced in \cite{Vel}, have the same properties of $L_n$, for what concerns the distribution of the zeros and the approximations of $\pi$. We also obtain $\pi$ not as the limit of a sequence, but equal to an expression involving the zeros of the polynomials $L_n$ and $M_n^a$ for $a>0$. %Finally, still in Section \ref{sec:result}, we introduce two relationships between $\pi$ and the golden ration $\varphi$.

Some perspectives of future applications of our results are reported in Section \ref{sec:conc}.

\section{Preliminaries.}
\label{sec:prelim}

In this section we recall properties and useful results from our previous papers (\cite{VelBer}, \cite{Vel}, \cite{Vel2}), and therefore we will list them without proofs. %Other preliminary results are probably well known but we will give their proofs for completeness and for reader's interest. In this section we also highlight the role of our previous results in the framework of existing literature about nested square roots of 2.

\subsection{The class of Lucas-Lehmer polynomials}

We recall below some basic facts about Lucas-Lehmer polynomials
\begin{equation}
\label{eq:Lucas_Lehmer}
L_0(x) = x \quad ; \quad L_n(x) = L_{n-1}^2 - 2 \ \ \forall n \geq 1
\end{equation}
taken from \cite{Vel}. The polynomials $L_n(x)$ are orthogonal with respect to the weight function
$$\frac{1}{4\sqrt{4-x^2}}$$
defined on $x\in(-2,2)$. %We denote this property with letter (a).

Besides, for each $n\geq1$ we have
\begin{equation}
\label{eq:propcheby1}
L_{n}(x)=2\ T_{2^{n-1}}\left(\frac{x^{2}}{2}-1\right)
\end{equation}
where the \emph{Tchebycheff polynomials of first kind} \cite{Bate53,Riv90} satisfy the recurrence relation
\begin{displaymath}
\begin{cases}
T_{n}(x)=2xT_{n-1}(x)-T_{n-2}(x) \qquad n \geq 2 \\
T_{0}(x)=1, \ \ T_{1}(x)=x \\
\end{cases}
\end{displaymath}
from which it easily follows that for the $n$-th term:
\begin{equation}
T_{n}(x)=\frac{\left(x -\sqrt{x^2-1} \right)^{n}+\left( x +\sqrt{x^2-1} \right)^{n}}{2}\, .
\end{equation}
This formula is valid in $\mathbb R$ for $|x| \geq 1$; here we assume instead that $T_n$, defined in $\mathbb R$, can take complex values, too. Let $x=2\cos\theta$, then the polynomials $L_{n}(x)$ admit the representation
\begin{equation}
\label{formulaconcos}
L_{n}(2\cos\theta)=2\cos\left(2^{n} \theta\right)\, .
\end{equation}

When $|x| \leq 2$, we can write $x = 2 \cos(\vartheta)$, thus $\displaystyle \frac{x^2}{2} - 1 = \cos(2 \vartheta)$; hence, for $|x| \neq \sqrt{2}$, we can also put
\begin{equation}
\label{theta}
\vartheta(x)= \frac{1}{2}\arctan\left[\frac{\sqrt{1-\left(\frac{x^{2}}{2}-1\right)^2}}{\frac{x^{2}}{2}-1} \right]+b\pi
\end{equation}
where $b$ is a binary digit; thus, using (\ref{formulaconcos}), we obtain
\begin{equation}
L_{n}(x)=2\cos\left(2^{n}\vartheta(x)\right) \ .
\end{equation}
Moreover, since $L_1(\pm \sqrt{2}) = 0 \ ; \ L_2(\pm \sqrt{2}) = - 2 \ ; \  L_n(\pm \sqrt{2}) = 2 \quad \forall n \ge 3 \ $, then the argument of $L_n(\pm \sqrt{2})$ is $0$ for every $n \ge 3$.

By setting further
\begin{equation}
\label{theta_value}
\theta(x)= \frac{1}{2}\arctan\left[\frac{\sqrt{1-\left(\frac{x^{2}}{2}-1\right)^2}}{\frac{x^{2}}{2}-1} \right]
\end{equation}
we can write:
\begin{equation}
\label{Ln_value}
L_{n}(x)=2\cos\left(2^{n}\theta(x)+2^{n}b\pi\right)=2\cos\left(2^{n}\theta(x)\right) \ .
\end{equation}

Like those of the first kind, \emph{Tchebycheff polynomial of second kind} are defined by a recurrence relation ~\cite{Bate53,Riv90}:
\begin{displaymath}
\begin{cases}
U_{0}(x)=1, \ \ U_{1}(x)=2x \\
U_{n}(x)=2xU_{n-1}(x)-U_{n-2}(x) \quad \forall n \geq 2\, ,\\
\end{cases}
\end{displaymath}
which is satisfied by
\begin{equation}
\label{eq:Usemplice}
U_{n}(x)=\sum_{k=0}^n (x + \sqrt{x^2 -1})^k (x - \sqrt{x^2-1})^{n-k} \quad \forall x \in [-1, 1] \ .
\end{equation}

This relation is equivalent to
\begin{equation}
\label{eq:Ufrazione}
U_{n}(x)=\frac{\left(x +\sqrt{x^2-1} \right)^{n+1}-\left( x -\sqrt{x^2-1} \right)^{n+1}}{2\sqrt{x^2-1}}
\end{equation}
for each $x \in (-1, 1)$. From continuity of function (\ref{eq:Usemplice}), we observe that (\ref{eq:Ufrazione}) can be extended by continuity in $x = \pm 1$, too.

It can therefore be put $U_n(\pm 1) = (\pm 1)^n (n+1)$ in (\ref{eq:Ufrazione}). From \cite{Vel}, for each $n\geq1$ we have
\begin{equation}
\label{eq:propcheby2}
\prod_{i=1}^{n}L_{i}(x)=U_{2^{n}-1}\left(\frac{x^{2}}{2}-1\right)
\end{equation}
and
\begin{equation}
\label{euler}
\prod_{i=1}^{n}L_{i}(2\cos\theta)= \frac{\sin\left(2^{n+1} \theta \right)}{\sin2\theta} \ .
\end{equation}

\subsection{An ordering for zeros of Lucas-Lehmer polynomials using Gray code.}
Given a binary code, its \emph{order} is the number of bits with which the code is built, while its \emph{length} is the number of strings that compose it. The celebrated Gray code \cite{Gard86,Nij78} is a binary code of order $n$ and length $2^{n}$.
\begin{figure}[h!]
  \begin{center}
\begin{tikzpicture}
\matrix (A) [matrix of nodes]
{0 & 0 & 0 & 0 \\
0 & 0 & 0 & 1 \\
0 & 0 & 1 & 1 \\
0 & 0 & 1 & 0 \\
0 & 1 & 1 & 0 \\
0 & 1 & 1 & 1 \\
0 & 1 & 0 & 1 \\
0 & 1 & 0 & 0 \\
1 & 1 & 0 & 0 \\
1 & 1 & 0 & 1 \\
1 & 1 & 1 & 1 \\
1 & 1 & 1 & 0 \\
1 & 0 & 1 & 0 \\
1 & 0 & 1 & 1 \\
1 & 0 & 0 & 1 \\
1 & 0 & 0 & 0 \\
};

\node[draw=blue, thick, inner sep=2pt, fit=(A-9-2.north west) (A-16-4.south east)] (BB) {};
\node[draw=red, thick, inner sep=2pt, fit=(A-13-3.north west) (BB.south east)] (BA) {};

\node[red, right= 3mm of BA.south east] (LA) {encapsulated sub-code};
\node[blue] at (BB-|LA) {encapsulated sub-code};

\end{tikzpicture}
   \end{center}
\caption{Sub-codes for $m=2$, $m=3$.}
\label{fig:gray}
 \end{figure}
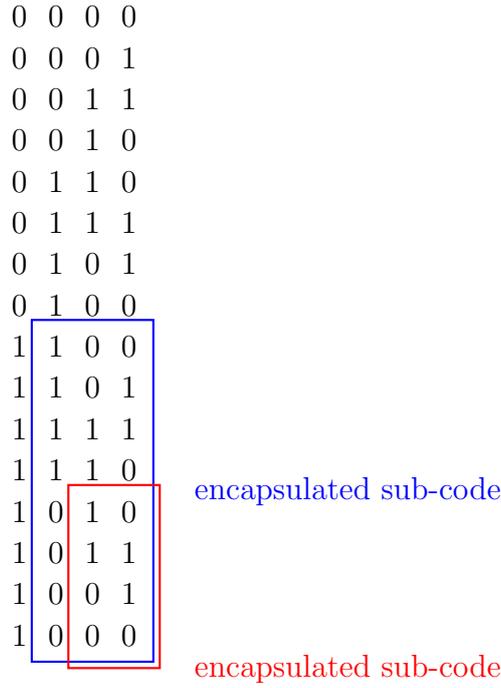
We recall below how a Gray Code is generated; if the code for $n-1$ bit is formed by binary strings
$$\begin{array}{l}
g_{n-1,1} \\
\vdots\\
g_{n-1,2^{n-1}-1}\\
g_{n-1,2^{n-1}}\ ,
  \end{array}
$$
the code for $n$ bits is built from the previous one in the following way:
$$\begin{array}{l}
0g_{n-1,1} \\
\vdots \\
0g_{n-1,2^{n-1}-1} \\
0g_{n-1,2^{n-1}} \\
1g_{n-1,2^{n-1}} \\
1g_{n-1,2^{n-1}-1}\\
\vdots\\
1g_{n-1,1}\ .
\end{array}$$

Just as an example, we have, for $n=1$: $g_{1,1}=0 \ ; \ g_{1,2}=1$; for $n=2$: $g_{2,1}=00 \ ; \ g_{2,2}=01 \ ; \ g_{2,3}=11 \ ; \ g_{2,4}=10$; for $n=3$:
\begin{align}\label{eq:gray3}
 g_{3,1}&=000   &   g_{3,2}&=001   &   g_{3,3}&=011   &   g_{3,4}&=010 \notag \\
 g_{3,5}&=110   &   g_{3,6}&=111   &   g_{3,7}&=101   &   g_{3,8}&=100 \ ,
\end{align}
and so on.

Following the notation introduced in \cite{Vel2}, we recall some preliminaries about Gray code.
\begin{definition}
\label{def:prima}
Let us consider a Gray code of order $n$ and length $2^{n}$. A \emph{sub-code} is a Gray code of order $m<n$ and length $2^{m}$.
\end{definition}
\begin{definition}
\label{def:seconda}
Let us consider a Gray code of order $n$ and length $2^{n}$. An \emph{encapsulated sub-code} is a sub-code built starting from the last string of Gray code of order $n$ that contains it.
\end{definition}
Figure (\ref{fig:gray}) contains some examples of encapsulated sub-codes inside a Gray code (with order $4$ and length $16$).

Let us consider the signs ``plus'' and ``minus'' in the nested form that expresses generic zeros of $L_{n}$,  as follows:
\begin{equation}
\label{eq:annidata}
\sqrt{2\pm\underbrace{\sqrt{2\pm\sqrt{2\pm\sqrt{2\pm...\pm\sqrt{2\pm\sqrt{2}}}}}}}\, .
\end{equation}
The underbrace encloses $n-1$ signs ``plus'' or ``minus'', each one placed before each nested radical. Starting from the first nested radical we apply a code (i.e., a system of rules) that associates bits $0$ and $1$ to ``plus'' and ``minus'' signs, respectively.

Obviously, it is possible to obtain $2^{n-1}$ strings formed by $n-1$ bits. Let us define with $\displaystyle \{\omega(g_{n-1, j}) \}_{j=1, ..., 2^{n-1}}$ the set of all the $2^{n-1}$ nested radicals of the form
\begin{equation*}
%\label{eq:annidata}
2\pm\underbrace{\sqrt{2\pm\sqrt{2\pm\sqrt{2\pm...\pm\sqrt{2\pm\sqrt{2}}}}}}_{n-1 \ \ signs}\, , %=\omega(g_{n-1, 1 \div 2^{n-1}}) \ ,
\end{equation*}
where each element of the set differs from the others for the sequence of ``plus'' and ``minus'' signs, and the index $j$ determines the position of the nested radical by virtue of Gray code.

\section{Main results: $\pi$ formulas involving nested radicals.}
\label{sec:result}

\subsection{Infinite sequences tending to $\pi$}

Let us consider two finite sequences $x=\{x_1,\dots,x_n\}$ and $y=\{y_1, \dots, y_m \}$, $n,m\in\mathbb N$. We define the \emph{concatenation} of these sequences the sequence $xy:=\{x_1,\dots,x_n,$ $y_1,\dots,y_n\}$. Let us also consider the binary string $b_{n-m}$ composed by $n-m$ bits. The following results concern the set $\{\omega\left( b_{n-m}\, g_{m,h} \right)\}_{h=0}^{2^m-1}$. For example, in the following Lemma \ref{l:1} we have $b_{n-m}=b_2=10$.

%Let us consider the writing:
%$$\omega(\underbrace{*...*}_{n-m}g_{m,h})=\omega_{n-m}(*...*g_{m,h})$$
%where the asterisks represent $n-m$ bits $0$ and $1$. Then we give the following results.
\begin{figure}[tb]
\centering
\includegraphics[scale=0.80]{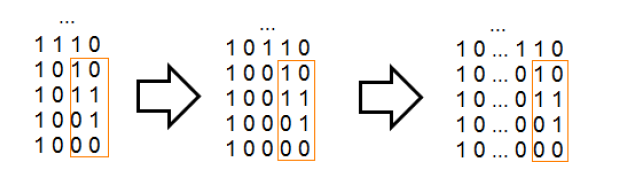}
\caption{A possible subcode (orange), where the meaning of the limit (\ref{eq:theorem_gray}) is highlighted: in this way the number of symbols 0, on the left of the sub-code, increases.}
\label{fig:limite}
\end{figure}

\begin{lemma}
\label{l:1}
For all $m\in\mathbb N$ one has:
\begin{equation}
\label{eq:aste}
\sqrt{\omega(10g_{m,h+1})}=2\sin\left(\frac{2h+1}{2^{m+4}}\pi\right) \ \ \ h\in\{0,1,\dots,2^m-1\}
\end{equation}
\end{lemma}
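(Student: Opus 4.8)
The plan is to reduce the nested radical $\sqrt{\omega_2(10 g_{m,h+1})}$ to a single cosine (or sine) evaluation by iterating the half-angle identity $\sqrt{2+2\cos\alpha}=2\cos(\alpha/2)$ and $\sqrt{2-2\cos\alpha}=2\sin(\alpha/2)$, while carefully tracking how each sign bit acts on the angle. Concretely, I would first recall from property (b) and Theorem~\ref{theo:graycode} that a string $g_{m,h+1}$ of $m$ sign-bits codifies the $(h+1)$-th positive zero of $L_{m+1}$ in decreasing order; combining (\ref{eq:propradcos}), (\ref{eq:propradsin}) and the representation (\ref{formulaconcos}), one has an explicit formula $\sqrt{\omega(g_{m,h+1})}=2\cos\!\bigl(\tfrac{(2h+1)\pi}{2^{m+2}}\bigr)$ for the zeros of $L_{m+1}$, the $(h+1)$-th largest corresponding to the angle $\tfrac{(2h+1)\pi}{2^{m+2}}$ (this should already be available from \cite{Vel2}; if not I would prove it by the same induction that proves Theorem~\ref{theo:graycode}, since the zeros of $L_{m+1}(2\cos\theta)=2\cos(2^m\theta)$ are exactly $\theta=\tfrac{(2k+1)\pi}{2^{m+1}}$, and halving via $x=2\cos\theta$ gives the stated value).

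Next I would prepend the two extra bits. Writing $\alpha_{m,h}:=\tfrac{(2h+1)\pi}{2^{m+2}}$ so that $\sqrt{\omega(g_{m,h+1})}=2\cos\alpha_{m,h}$, the bit $0$ (a ``plus'' sign) acts by $\sqrt{\omega(0\,g_{m,h+1})}=\sqrt{2+2\cos\alpha_{m,h}}=2\cos(\alpha_{m,h}/2)$, and the bit $1$ (a ``minus'' sign) then acts by $\sqrt{\omega(10\,g_{m,h+1})}=\sqrt{2-2\cos(\alpha_{m,h}/2)}=2\sin(\alpha_{m,h}/4)$. Substituting $\alpha_{m,h}=\tfrac{(2h+1)\pi}{2^{m+2}}$ gives $\alpha_{m,h}/4=\tfrac{(2h+1)\pi}{2^{m+4}}$, which is exactly the right-hand side of (\ref{eq:aste}). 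The range $h\in[0,2^m-1]$ is inherited from the fact that $g_{m,\cdot}$ ranges over all $2^m$ strings of order $m$, and one checks that for these $h$ the angle $\alpha_{m,h}/2=\tfrac{(2h+1)\pi}{2^{m+3}}$ lies in $(0,\pi/2)$ so that $\cos(\alpha_{m,h}/2)\in(0,1)$ and the outermost radical $\sqrt{2-2\cos(\alpha_{m,h}/2)}$ is a genuine (positive real) number — this is what legitimizes the two half-angle reductions with the chosen branch of the square root.

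The main obstacle is the bookkeeping for the sign bits: I must make sure that the convention ``bit $0\mapsto{}$plus, bit $1\mapsto{}$minus'' is applied with the correct nesting order (the excerpt says the code is read \emph{starting from the first nested radical}, i.e. the outermost sign corresponds to the \emph{leftmost} bit $1$ in $10g_{m,h+1}$), and that the inductive formula for the zeros of $L_{m+1}$ really produces the $(h+1)$-th zero in \emph{decreasing} order matching the Gray-code index — a potential off-by-one in passing between ``$h\in[0,2^m-1]$'' and ``the $(h+1)$-th string $g_{m,h+1}$''. I would handle this by verifying the base case $m=0$ explicitly: there $g_{0,1}$ is the empty string, $\sqrt{\omega(10\,g_{0,1})}=\sqrt{2-\sqrt{2}}=2\sin(\pi/8)=2\sin\!\bigl(\tfrac{1}{16}\pi\bigr)$, which matches (\ref{eq:aste}) with $h=0$, $m=0$; and the case $m=1$ against the two values $\sqrt{2-\sqrt{2+\sqrt{2}}}=2\sin(\pi/16)$ and $\sqrt{2-\sqrt{2-\sqrt{2}}}=2\sin(3\pi/16)$, confirming the indexing. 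The rest is the routine half-angle computation described above.
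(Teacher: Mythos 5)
Your argument is correct, but it follows a genuinely different route from the paper's. The paper proves Lemma \ref{l:1} by induction on $m$, splitting the inductive step according to whether $g_{m+1,h+1}=(0,g_{m,h+1})$ or $g_{m+1,h+1}=(1,g_{m,2^m-h})$ and pushing the induction hypothesis through the nested radicals by hand; it never invokes a closed form for $\sqrt{\omega(g_{m,h+1})}$. You instead take as input the identity $\sqrt{\omega(g_{m,h+1})}=2\cos\bigl(\tfrac{(2h+1)\pi}{2^{m+2}}\bigr)$ — which is indeed an immediate consequence of Theorem \ref{theo:graycode} together with (\ref{formulaconcos}), since the positive zeros of $L_{m+1}$ are exactly $2\cos\bigl(\tfrac{(2k+1)\pi}{2^{m+2}}\bigr)$, $k=0,\dots,2^m-1$, already in decreasing order — and then apply the two half-angle steps $\sqrt{2+2\cos\alpha}=2\cos(\alpha/2)$, $\sqrt{2-2\cos(\alpha/2)}=2\sin(\alpha/4)$ for the prepended bits $0$ and $1$. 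This is shorter, avoids the case split on the Gray-code recursion entirely, and in fact generalizes with no extra effort: prepending $k$ zeros and one leading $1$ gives $2\sin\bigl(\tfrac{(2h+1)\pi}{2^{m+k+3}}\bigr)$ directly, so your method subsumes Proposition \ref{prop:terza} as well, whereas the paper needs a second induction (on $n$) there. What the paper's route buys is self-containedness: it only uses the recursive definition of the Gray code and the base case $m=1$, not the identification of $\sqrt{\omega(g_{m,h+1})}$ with a specific zero of $L_{m+1}$.

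One slip to fix: your explicit sanity checks are each missing one layer of nesting. For $m=0$, $h=0$ the radical is $\sqrt{2-\sqrt{2+\sqrt{2}}}=2\sin(\pi/16)$, not $\sqrt{2-\sqrt{2}}$ (and $2\sin(\pi/8)\neq 2\sin(\pi/16)$ as written); for $m=1$ the two values are $\sqrt{2-\sqrt{2+\sqrt{2+\sqrt{2}}}}=2\sin(\pi/32)$ and $\sqrt{2-\sqrt{2+\sqrt{2-\sqrt{2}}}}=2\sin(3\pi/32)$, since $\omega_2(10g_{1,j})$ carries three signs. Recall that $\omega$ applied to a $k$-bit string contains $k$ signs and $k+1$ twos, so $\omega(g_{0,1})=2$ and $\omega_2(10g_{0,1})=2-\sqrt{2+\sqrt{2}}$. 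Since these checks were precisely your guard against off-by-one errors in the indexing, they should be corrected, but the general computation in the body of your argument is sound.
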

\begin{proof}
We proceed with induction principle for $m$ to prove (\ref{eq:aste}). If $m=1$:
\begin{equation}
\sqrt{\omega(10g_{1,h+1})}=2\sin\left(\frac{2h+1}{2^{5}}\pi\right) \ \ \ h\in\{0,1\}
\end{equation}
i.e.
$$\sqrt{\omega(10g_{1,1})}=2\sin\left(\frac{\pi}{2^{5}}\right)$$
for $h=0$, and
$$\sqrt{\omega(10g_{1,2})}=2\sin\left(\frac{3\pi}{2^{5}}\right)$$
for $h=1$, where $g_{1,1}=0$ and $g_{1,2}=1$. These formulas are easy to check. Now we are going to check (\ref{eq:aste}) for $m+1$:
\begin{equation}
\label{eq:astepiuno}
\sqrt{\omega(10g_{m+1,h+1})}=2\sin\left(\frac{2h+1}{2^{m+5}}\pi\right) \ \ \ h\in\{0,1,\dots,2^{m+1}-1\}
\end{equation}
having assumed it true for $m\geq 1$.
From Gray Code's definition we have that either a) $g_{m+1,h+1}=0g_{m,h+1}$ or b) $g_{m+1,h+1}=1g_{m,2^m-h}$. In the former case:
$$\sqrt{\omega(10g_{m+1,h+1})}=\sqrt{\omega(100g_{m,h+1})}$$
where
\begin{align}
\label{eq:aste2}
\sqrt{\omega(100g_{m,h+1})}&=\sqrt{2-\sqrt{\omega(00g_{m,h+1})}} \notag \\
&=\sqrt{2-\sqrt{2+\sqrt{\omega(0g_{m,h+1})}}}\, .
\end{align}
But in fact: $\omega(10g_{m,h+1})=2-\sqrt{\omega(0g_{m,h+1})}$, so (\ref{eq:aste2}) becomes
\begin{align}
\sqrt{\omega(100g_{m,h+1})}&=\sqrt{2-\sqrt{2+\sqrt{\omega(0g_{m,h+1})}}} \notag \\
&=\sqrt{2-\sqrt{4-\omega(10g_{m,h+1})}} \notag \\
&=\sqrt{2-\sqrt{4-4\sin^2\left(\frac{2h+1}{2^{m+4}}\pi\right)}} \notag \\
&=\sqrt{2-2\cos\left(\frac{2h+1}{2^{m+4}}\pi\right)} \notag \\
&=2\sin\left(\frac{2h+1}{2^{m+5}}\pi\right)\, .
\end{align}
Therefore (\ref{eq:astepiuno}) is proved for the case a). %for the first few $h\in[0,2^m-1]$.

Now we assume that $g_{m+1,h+1}=1g_{m,2^m-h}$:
$$\sqrt{\omega(10g_{m+1,h+1})}=\sqrt{\omega(101g_{m,2^m-h})}$$
thus
\begin{align}
\sqrt{\omega(101g_{m,2^m-h})}&=\sqrt{2-\sqrt{\omega(01g_{m,2^m-h})}} \notag \\
&=\sqrt{2-\sqrt{2+\sqrt{\omega(1g_{m,2^m-h})}}} \notag \\
&=\sqrt{2-\sqrt{2+\sqrt{2-\sqrt{\omega(g_{m,2^m-h})}}}}
\end{align}
Noting that
$$\omega(0g_{m,2^m-h})=2+\sqrt{\omega(g_{m,2^m-h})}$$
it follows that
\begin{equation}
\label{eq:aste3}
\sqrt{\omega(101g_{m,2^m-h})}=\sqrt{2-\sqrt{2+\sqrt{4-\omega(0g_{m,2^m-h})}}}
\end{equation}
From $\omega(10g_{m,2^m-h})=2-\sqrt{\omega(0g_{m,2^m-h})}$, equation (\ref{eq:aste3}) becomes
\begin{equation}
\label{eq:aste4}
\sqrt{\omega(101g_{m,2^m-h})}=\sqrt{2-\sqrt{2+\sqrt{4-\left[2-\omega(10g_{m,2^m-h})\right]^2}}}
\end{equation}
From (\ref{eq:aste}), we have
$$\sqrt{\omega(10g_{m,2^m-h})}=2\sin\left(\frac{2^{m+1}-(2h+1)}{2^{m+4}}\pi\right)$$
and equation (\ref{eq:aste4}) can be rewritten
\begin{align}
\sqrt{\omega(101g_{m,2^m-h})}&=\sqrt{2-\sqrt{2+\sqrt{4-\left[2-\omega(10g_{m,2^m-h})\right]^2}}}\notag \\
&=\sqrt{2-\sqrt{2+\sqrt{4-\left[2-4\sin^2\left(\frac{2^{m+1}-(2h+1)}{2^{m+4}}\pi\right)\right]^2}}}\notag \\
&=\sqrt{2-\sqrt{2+\sqrt{4-4\cos^2\left(\frac{2^{m+1}-(2h+1)}{2^{m+3}}\pi\right)}}}\notag \\
&=\sqrt{2-\sqrt{2+2\sin\left(\frac{2^{m+1}-(2h+1)}{2^{m+3}}\pi\right)}}\notag \\
&=\sqrt{2-\sqrt{2+2\cos\left(\frac{\pi}{2}-\frac{2^{m+1}-(2h+1)}{2^{m+3}}\pi\right)}}\notag \\
&=\sqrt{2-2\cos\left(\frac{\pi}{4}-\frac{2^{m+1}-(2h+1)}{2^{m+4}}\pi\right)}\, .
\end{align}
Accordingly:
$$\sqrt{\omega(101g_{m,2^m-h})}=2\sin\left(\frac{2(h+2^{m})+1}{2^{m+5}}\pi\right)\, .$$
Since the term $h+2^{m}\in\{2^m,\dots,2^{m+1}-1\}$ for $h\in\{0,\dots,2^m-1\}$, then (\ref{eq:astepiuno}) is fully shown and, with it, the whole proposition.
\end{proof}

\begin{proposition}
\label{prop:terza}
For each $n\geq m+2$, $h\in\mathbb N$ such that $h\in\{0,1,\dots,2^m-1\}$:
\begin{equation}
\label{eq:propterza}
\sqrt{\omega(\bar b_{n-m}g_{m,h+1})}=2\sin\left(\frac{2h+1}{2^{n+2}}\pi\right)\, ,
\end{equation}
where $\bar b_{n-m}=10\dots0$ has $n-m-1$ zeros.
\end{proposition}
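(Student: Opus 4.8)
The plan is to prove Proposition \ref{prop:terza} by induction on the number of leading zeros between the initial ``1'' and the block $g_{m,h+1}$; equivalently, by induction on $n$ for fixed $m$, with the base case $n=m+2$ supplied directly by Lemma \ref{l:1}. Indeed, when $n=m+2$ the string $10\ldots 0g_{m,h+1}$ has exactly one interior $0$, so it reads $10g_{m,h+1}$, and \eqref{eq:propterza} becomes $\sqrt{\omega_2(10g_{m,h+1})}=2\sin\!\left(\frac{2h+1}{2^{m+4}}\pi\right)$, which is precisely \eqref{eq:aste}. So the base case is free.

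For the inductive step, suppose the formula holds for some $n\geq m+2$, i.e.
$$\sqrt{\omega_{n-m}(\underbrace{10\ldots0}_{n-m}g_{m,h+1})}=2\sin\left(\frac{2h+1}{2^{n+2}}\pi\right),$$
and consider the string for $n+1$, which has one more interior $0$. The key observation is the peeling identity already used repeatedly in the proof of Lemma \ref{l:1}: prepending a ``1'' to a nested-radical word corresponds to $\omega(1w)=2-\sqrt{\omega(w)}$, while prepending a ``0'' gives $\omega(0w)=2+\sqrt{\omega(w)}$. Writing the $(n+1)$-word as $10(0\ldots0g_{m,h+1})$ and applying these twice, I would express $\sqrt{\omega_{n-m+1}(10\ldots0g_{m,h+1})}$ in terms of $\omega_{n-m}(10\ldots0g_{m,h+1})$ exactly as in \eqref{eq:aste2}: namely
$$\sqrt{\omega_{n+1-m}(100\ldots0g_{m,h+1})}=\sqrt{2-\sqrt{2+\sqrt{\omega_{n-m-1}(0\ldots0g_{m,h+1})}}}=\sqrt{2-\sqrt{4-\omega_{n-m}(10\ldots0g_{m,h+1})}}.$$
Then substituting the inductive hypothesis $\omega_{n-m}(10\ldots0g_{m,h+1})=4\sin^2\!\left(\frac{2h+1}{2^{n+2}}\pi\right)$ and using $4-4\sin^2=4\cos^2$, followed by the half-angle identity $2-2\cos\alpha = 4\sin^2(\alpha/2)$, collapses the expression to $2\sin\!\left(\frac{2h+1}{2^{n+3}}\pi\right)$, which is \eqref{eq:propterza} with $n$ replaced by $n+1$. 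This is the same two-step trigonometric collapse that appears in case a) of Lemma \ref{l:1}'s proof, just iterated one level deeper.

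The only genuinely delicate point is bookkeeping: one must check that the word $10\ldots0g_{m,h+1}$ with $n+1-m$ leading symbols really does decompose as $1,0$ followed by the word $0\ldots0g_{m,h+1}$ with $n-m-1$ leading symbols, and that the innermost structure $g_{m,h+1}$ is untouched throughout — in particular that none of the peeling steps ever reaches into the $g_{m,h+1}$ block, which is guaranteed by the hypothesis $n\geq m+2$ (so there is always at least one interior $0$ to peel before touching $g_{m,h+1}$). One also has to confirm the constraint $h\in[0,2^m-1]$ is preserved, which is immediate since $m$ and $h$ do not change in the induction. I expect the indexing of the $\omega_{n-m}$ subscripts to be the main source of potential slips, but there is no real obstacle: the argument is a clean one-variable induction whose step is a verbatim repetition of the computation already carried out in Lemma \ref{l:1}. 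An alternative, even shorter route would be to note that Lemma \ref{l:1} together with a single application of the identity $\sqrt{\omega_{k+1}(10w)}=\sqrt{2-\sqrt{4-\omega_k(10w)}}$ and the double half-angle reduction gives the general case by a trivial iteration, bypassing an explicit induction write-up; I would present whichever version is cleaner in context, but the inductive phrasing is safest.
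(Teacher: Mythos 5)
Your proposal is correct and follows essentially the same route as the paper: an induction on $n$ for fixed $m$, with Lemma \ref{l:1} as the base case and an inductive step that peels the leading $1$ and $0$ via $\omega(1w)=2-\sqrt{\omega(w)}$, $\omega(0w)=2+\sqrt{\omega(w)}$ and collapses $2-\sqrt{4-4\sin^2\alpha}=4\sin^2(\alpha/2)$. The only detail worth adding explicitly is that $\sqrt{4-4\sin^2\alpha}=2\cos\alpha$ (rather than $2|\cos\alpha|$) because $\frac{2h+1}{2^{n+2}}\pi<\frac{\pi}{2}$ under the stated constraints, a point the paper also notes.
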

\begin{proof}
Put $n-m=\delta_0$, $n-m-1=\delta_1$, $n-m-2=\delta_2$, $\dots$, $n-m-k=\delta_{k}$ for $0\leq k\leq n-m-2$. Let us proceed by means of induction principle on $n$. Fixing $m$, suppose formula (\ref{eq:propterza}) to be true for a generic index $\delta_1$,
\begin{equation}
\label{eq:n1}
\sqrt{\omega (\bar b_{\delta_1} g_{m,h+1})}=2\sin\left(\frac{2h+1}{2^{n+1}}\pi\right)\, , \quad \bar b_{\delta_1}=10\dots0\, ,
\end{equation}
where $\bar b_{\delta_1}$ has $\delta_1-1$ zeros --- and proceed to check the case $\delta_0$. We work on both sides of (\ref{eq:n1}):
\begin{align*}
\omega (\bar b_{\delta_1}g_{m,h+1})&=4\sin^2\left(\frac{2h+1}{2^{n+1}}\pi\right)  \\
2-\sqrt{\omega (\tilde b_{\delta_2}g_{m,h+1})}&=4-4\cos^2\left(\frac{2h+1}{2^{n+1}}\pi\right)\, ,  
\end{align*}
where $\tilde b_{\delta_2}=0\dots0$. Then
\begin{align}
&-\sqrt{\omega(\tilde b_{\delta_2}g_{m,h+1})}=2-4\cos^2\left(\frac{2h+1}{2^{n+1}}\pi\right) \notag \\
&2+\sqrt{\omega (\tilde b_{\delta_2} g_{m,h+1})}=4\cos^2\left(\frac{2h+1}{2^{n+1}}\pi\right) \notag \\
&\omega ( \tilde b_{\delta_1} g_{m,h+1})=4\cos^2\left(\frac{2h+1}{2^{n+1}}\pi\right)\, , \quad \tilde b_{\delta_1}=0\tilde b_{\delta_2}\, ,
\end{align}
whence
$$\sqrt{\omega (\tilde b_{\delta_1} g_{m,h+1})}=2\left|\cos\left(\frac{2h+1}{2^{n+1}}\pi\right)\right|=2\cos\left(\frac{2h+1}{2^{n+1}}\pi\right)\, .$$
Thus:
\begin{equation*}
  \sqrt{\omega (\tilde b_{\delta_1} g_{m,h+1})}=2\left(1-2\sin^2\left(\frac{2h+1}{2^{n+2}}\pi\right)\right)
\end{equation*}
or
\begin{equation*}
2-\sqrt{\omega (\tilde b_{\delta_1} g_{m,h+1})}=4\sin^2\left(\frac{2h+1}{2^{n+2}}\pi\right)\, ,
\end{equation*}
and
$$\omega(\bar b_{\delta_0} g_{m,h+1})=4\sin^2\left(\frac{2h+1}{2^{n+2}}\pi\right)\, ,$$
hence
$$\sqrt{\omega (\bar b_{\delta_0} g_{m,h+1})}=2\left|\sin\left(\frac{2h+1}{2^{n+2}}\pi\right)\right|=2\sin\left(\frac{2h+1}{2^{n+2}}\pi\right).$$
Let us remark that $\bar b_{\delta_0}=\bar b_{n-m}=10\dots0$ and it has $n-m-1$ zeros.

The absolute value can be removed by the proposition's assumptions. Therefore, the inductive step is proved. Let us consider the base step: $\delta_0=2$. Indeed:
$$\sqrt{\omega (10g_{m,h+1})}=2\sin\left(\frac{2h+1}{2^{n-m+2}2^m}\pi\right)$$
or,
\begin{equation}
\sqrt{\omega(10g_{m,h+1})}=2\sin\left(\frac{2h+1}{2^{m+4}}\pi\right) \ \ \ h\in\{0,1,\dots,2^m-1\}
\end{equation}
which is proved, for all $m\in\mathbb N$, in Lemma \ref{l:1}.
\end{proof}

\begin{theorem}
\label{th:gray}
\begin{equation}
\label{eq:theorem_gray}
\lim_{n\rightarrow\infty}\ \frac{2^{n+1}}{2h+1}\ \sqrt{\omega (\bar b_{n-m} g_{m,h+1})}=\pi
\end{equation}
where $\bar b_{n-m}=10\dots0$ has $n-m-1$ zeros, for every $h\in\mathbb N$ such that $h\in\{0,1,\dots,2^m-1\}$ and $n>m+1$.
\end{theorem}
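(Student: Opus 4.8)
The plan is to reduce Theorem~\ref{th:gray} to the closed-form evaluation already supplied by Proposition~\ref{prop:terza}, and then to a standard elementary limit. First I would invoke Proposition~\ref{prop:terza}: for every $n\geq m+2$ and every $h\in[0,2^m-1]$ we have the exact identity
\begin{equation*}
\sqrt{\omega_{n-m}(10\dots0g_{m,h+1})}=2\sin\left(\frac{2h+1}{2^{n+2}}\pi\right)\, .
\end{equation*}
Substituting this into the left-hand side of \eqref{eq:theorem_gray} gives
\begin{equation*}
\frac{2^{n+1}}{2h+1}\,\sqrt{\omega_{n-m}(10\dots0g_{m,h+1})}
=\frac{2^{n+1}}{2h+1}\cdot 2\sin\left(\frac{2h+1}{2^{n+2}}\pi\right)
=\pi\cdot\frac{\sin\left(\frac{(2h+1)\pi}{2^{n+2}}\right)}{\frac{(2h+1)\pi}{2^{n+2}}}\, .
\end{equation*}

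Next I would observe that, with $h$ fixed, the argument $t_n:=\dfrac{(2h+1)\pi}{2^{n+2}}$ tends to $0$ as $n\to\infty$, so by the elementary limit $\lim_{t\to0}\dfrac{\sin t}{t}=1$ the whole expression tends to $\pi$. One should briefly note that the hypothesis $n>m+1$, i.e. $n\geq m+2$, is exactly what is needed to apply Proposition~\ref{prop:terza} (and to guarantee there are at least two leading zeros after the initial ``$1$'', so the symbol string $10\dots0g_{m,h+1}$ makes sense); since we are taking $n\to\infty$ this restriction is harmless. It is also worth remarking that the case $m=0$, $h=0$ recovers Corollary~\ref{formulapigreco}, i.e. formula~\eqref{eq:Munkhammar}, and that varying $m$ and $h$ produces the promised infinite family of distinct nested-radical sequences converging to $\pi$, each one anchored to a different zero of $L_n$ (the one coded by the encapsulated sub-code $g_{m,h+1}$ preceded by a sign pattern $-+\cdots+$).

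There is essentially no obstacle in this argument: all the combinatorial and trigonometric work — the interplay between the Gray-code ordering of the signs and the half-angle formulas — has been absorbed into Lemma~\ref{l:1} and Proposition~\ref{prop:terza}, whose inductions on $m$ and on $n$ respectively do the heavy lifting. If anything, the only point requiring a word of care is making explicit that the limit is taken with $m$ and $h$ held fixed while $n\to\infty$, so that $2h+1$ is a genuine constant and the indeterminate form is the familiar $\frac{\sin t}{t}$ one rather than something depending on two parameters; once that is said, the proof is a one-line substitution followed by the standard limit.
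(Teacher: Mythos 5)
Your proposal is correct and follows exactly the paper's own argument: substitute the closed form $\sqrt{\omega_{n-m}(10\dots0g_{m,h+1})}=2\sin\bigl(\tfrac{2h+1}{2^{n+2}}\pi\bigr)$ from Proposition~\ref{prop:terza} and conclude by the standard limit $\sin t/t\to1$. The extra remarks (holding $m,h$ fixed, recovering Corollary~\ref{formulapigreco}) are sound but not needed beyond what the paper states.
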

\begin{proof}
From Proposition \ref{prop:terza}, we have
\begin{equation}
\frac{2^{n+1}}{2h+1}\ \sqrt{\omega (\bar b_{n-m} g_{m,h+1})}=\frac{2^{n+2}}{2h+1}\ \sin\left(\frac{2h+1}{2^{n+2}}\pi\right)
\end{equation}
that, for a well-known limit, tends to $\pi$ for $n \to \infty$.
\end{proof}
\begin{example}
With the help of computational tools we show below some iterations of a sequence described by
$$\frac{2^{n+1}}{2h+1}\ \sqrt{\omega (\bar b_{n-m} g_{m,h+1})}\, ,$$
where $\bar b_{n-m}=10\dots0$ has $n-m-1$ zeros.

Let us consider $m=3$; then
$$g_{3,1}=000 \ ; \ g_{3,2}=001 \ ; \ g_{3,3}=011 \ ; \ g_{3,4}=010 \ ;$$
$$g_{3,5}=110 \ ; \ g_{3,6}=111 \ ; \ g_{3,7}=101 \ ; \ g_{3,8}=100.$$
We choose the binary string $g_{3,6}=111$; in this case, if $m=3$, one has $h+1=6$ and so $h=5$. This means that we are iterating
$$\frac{2^{n+1}}{11}\ \sqrt{\omega(\underbrace{10...0}_{n-3}111)}\, .$$
Hence, for $n=8$:
$$\frac{2^{9}}{11}\ \sqrt{\omega(10000111)}=$$
$$\frac{2^{9}}{11} \ \sqrt{2 - \sqrt{
   2 + \sqrt{
     2 + \sqrt{2 + \sqrt{2 + \sqrt{2 - \sqrt{2 - \sqrt{2 - \sqrt{2}}}}}}}}}\simeq 3.\textbf{14}0996\dots$$
For $n=12$:
$$\frac{2^{13}}{11}\ \sqrt{\omega(100000000111)}=$$
$$\frac{2^{13}}{11} \ \sqrt{2 - \sqrt{
   2 + \sqrt{
     2 + \sqrt{
       2 + \sqrt{
         2 + \sqrt{
           2 + \sqrt{
             2 + \sqrt{
               2 + \sqrt{
                 2 + \sqrt{2 - \sqrt{2 - \sqrt{2 - \sqrt{2}}}}}}}}}}}}}$$
$$\simeq 3.\textbf{14159}0324\dots$$
and so on.
%istruzione mathematica N[2^13 / 11 Sqrt[2 - Sqrt[2 + Sqrt[2 + Sqrt[2 + Sqrt[2 + Sqrt[2 + Sqrt[2 + Sqrt[2 + Sqrt[2 + Sqrt[2 - Sqrt[2 - Sqrt[2 - Sqrt[2]]]]]]]]]]]]], 10]
\end{example}

\subsection{The generalized map $M^a_{n}=2a \left(M^a_{n-1}\right)^{2}-\frac{1}{a}$.}

In \cite{Vel} we introduced an extension of the map $L_n$, obtained through the iterated formula $M^a_{n}=2a \left(M^a_{n-1}\right)^{2}-\frac{1}{a} \ , \ a >0$, with $M^a_0(x) = x$. It follows that
\begin{equation}
\displaystyle M^a_0(x) = x \quad ; \quad M^a_1(x) = 2a x^2 - \frac{1}{a} \quad ; \quad M^a_2(x) = 8a^3x^4 - 8ax^2 + \frac{1}{a} \quad \dots .
\end{equation}
Note that the map $L_n$ is a particular case of $M^a_n$, obtained by setting $a=1/2$.
We briefly show that the map $M^a_n$ leads to the same $\pi$ formulas stated in the previous sections.
\begin{proposition}
For $n\geq2$ we have
\begin{equation}
\label{eq:prop1}
M^a_{n}(x)= \frac{1}{a} \cdot \cos(a\ 2^{n}x)+o(x^{2})\, .
\end{equation}
\end{proposition}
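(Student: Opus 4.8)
The plan is to reduce the statement to the Lucas--Lehmer case by an affine substitution, and then read off the asymptotics from the trigonometric representation (\ref{formulaconcos}).

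First I would prove, by induction on $n\ge 0$, the conjugacy
\[
M^a_n(x)=\frac{1}{2a}\,L_n(2ax).
\]
For $n=0$ both sides equal $x$, since $L_0(x)=x$. If the identity holds for $n-1$, then, using the defining recursion (\ref{eq:Lucas_Lehmer}),
\[
M^a_n(x)=2a\bigl(M^a_{n-1}(x)\bigr)^2-\frac1a
=\frac{1}{2a}\Bigl(L_{n-1}(2ax)^2-2\Bigr)=\frac{1}{2a}L_n(2ax).
\]
(In particular $M^a_1(x)=\tfrac1{2a}\bigl((2ax)^2-2\bigr)=2ax^2-\tfrac1a$, matching the list given before the statement.)

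Next, fix $n$ and restrict to $|x|<1/a$, so that $ax=\cos\vartheta$ with $\vartheta=\arccos(ax)\in[0,\pi]$; by (\ref{formulaconcos}),
\[
M^a_n(x)=\frac{1}{2a}L_n(2\cos\vartheta)=\frac1a\cos\!\bigl(2^n\arccos(ax)\bigr).
\]
Then I would Taylor-expand $\arccos$ near $1$: since $\arccos(t)=\tfrac\pi2-\arcsin(t)$ and $\arcsin(t)=t+O(t^3)$ as $t\to0$, we get $2^n\arccos(ax)=2^{n-1}\pi-2^n ax+O(x^3)$ as $x\to0$. Here the hypothesis $n\ge2$ enters decisively: $2^{n-1}$ is then an \emph{even} integer, so $\cos(2^{n-1}\pi-t)=\cos t$ for all $t$, and hence
\[
M^a_n(x)=\frac1a\cos\!\bigl(2^n ax+O(x^3)\bigr)=\frac1a\cos(a\,2^n x)+o(x^2),
\]
the last step because $|\cos(\alpha+\varepsilon)-\cos\alpha|\le|\varepsilon|$ and $O(x^3)=o(x^2)$.

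I expect the only genuine subtlety to be the bookkeeping of the parity of $2^{n-1}$, i.e.\ tracking the value of $\cos(2^{n-1}\pi)$ in the angle-subtraction formula; this is precisely why the statement requires $n\ge2$, since for $n=1$ one has $\cos\pi=-1$ and obtains instead $M^a_1(x)=-\tfrac1a\cos(2ax)+o(x^2)$, consistent with $M^a_1(x)=2ax^2-\tfrac1a$. Everything else is routine; in fact the same computation yields the sharper remainder $O(x^4)$.
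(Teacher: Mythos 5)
Your proof is correct, but it takes a genuinely different route from the paper's. The paper argues directly by induction on the recursion, showing $M^a_{n}(x)= \frac{1}{a} - a\,2^{2n-1}x^{2}+o(x^{2})$ (base case $n=2$ computed by hand, inductive step by squaring the expansion) and then observing that this coincides with the second-order McLaurin polynomial of $\frac{1}{a}\cos(a\,2^{n}x)$; it never leaves the realm of Taylor polynomials. You instead first establish the exact conjugacy $M^a_n(x)=\frac{1}{2a}L_n(2ax)$ and then invoke the closed trigonometric form $L_n(2\cos\vartheta)=2\cos(2^n\vartheta)$ from (\ref{formulaconcos}), reducing everything to the expansion of $\arccos$ near the origin of $ax$. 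Your route buys strictly more: an exact identity $M^a_n(x)=\frac{1}{a}\cos\bigl(2^n\arccos(ax)\bigr)$ valid on all of $[-1/a,1/a]$ (not just an asymptotic statement at $0$), a transparent explanation of why $n\ge 2$ is needed (the parity of $2^{n-1}$, with the correct sign flip for $n=1$), and the sharper $O(x^4)$ remainder; it also makes the relation to the rest of the paper's machinery (zeros of $M^a_n$ being rescaled zeros of $L_n$) immediate. The paper's argument is more elementary and self-contained, needing only the recursion itself. One very minor point: you should note explicitly that the substitution $ax=\cos\vartheta$ requires $|ax|\le 1$, which is harmless since the claim is an asymptotic statement as $x\to 0$ — but you do effectively say this by restricting to $|x|<1/a$.
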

\begin{proof}
We must show that:
\begin{equation}
M^a_{n}(x)= \frac{1}{a} - a 2^{2n-1}x^{2}+o(x^{2})
\end{equation}
where we take into account the McLaurin polynomial of cosine. We proceed by induction.
For $n=2$:
\begin{equation}
M^a_{2}(x)= 2a \left(2a x^{2}-\frac{1}{a}\right)^{2}-\frac{1}{a}= \frac{1}{a} - 8ax^{2}+o(x^{2})
\end{equation}
Let us consider the second order McLaurin polynomial of $\frac{1}{a} \cdot \cos(4ax)$: it is just $\frac{1}{a} - 8ax^{2}+o(x^{2})$, thus verifying the relation for $n=2$. Let us now assume (\ref{eq:prop1}) is true for a generic $n$, and deduce that it is also true for $n+1$:
\begin{align}
&M^a_{n+1}=2a \left(M^a_{n}\right)^{2}-\frac{1}{a}=2a\left[\frac{1}{a} - a 2^{2n-1}x^{2}+o(x^{2}) \right]^{2}-\frac{1}{a}=\notag \\
&=\frac{1}{a} - a 2^{2n+1}x^{2}+o(x^{2})
\end{align}
which is in fact the McLaurin polynomial of $\frac{1}{a} \cdot \cos(a\ 2^{n+1}x)$.
\end{proof}
\begin{proposition}
At each iteration the zeros of the map $M^a_n (n \geq 1)$ have the form
\begin{equation}
\label{eq:prop2}
\pm \frac{1}{2a}\cdot \sqrt{2\pm\sqrt{2\pm\sqrt{2\pm\sqrt{2\pm...\pm\sqrt{2}}}}}\, .
\end{equation}
\end{proposition}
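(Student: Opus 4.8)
The plan is to reduce the statement to the already-established structure of the zeros of $L_n$ by exhibiting an explicit affine conjugacy between the two maps. Concretely, I would first prove the scaling identity
\begin{equation}
M^a_n(x) = \frac{1}{2a}\, L_n(2a x) \qquad \text{for all } n \geq 0 ,
\end{equation}
and then simply transport the known factorization of $L_n$ through the change of variable $x \mapsto 2ax$, which is legitimate precisely because $a>0$ makes $2a$ a nonzero scaling.

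For the identity itself I would argue by induction on $n$. The base case $n=0$ is immediate, since $\frac{1}{2a}L_0(2ax) = \frac{1}{2a}\cdot 2ax = x = M^a_0(x)$. For the inductive step, assuming $M^a_n(x) = \frac{1}{2a}L_n(2ax)$, I would compute, using only the defining recursions $M^a_{n+1} = 2a(M^a_n)^2 - \frac{1}{a}$ and $L_{n+1} = L_n^2 - 2$,
\begin{equation}
M^a_{n+1}(x) = 2a\bigl(M^a_n(x)\bigr)^2 - \frac{1}{a} = 2a\cdot\frac{1}{4a^2}\,L_n(2ax)^2 - \frac{1}{a} = \frac{1}{2a}\bigl(L_n(2ax)^2 - 2\bigr) = \frac{1}{2a}\,L_{n+1}(2ax) .
\end{equation}
This closes the induction. (As a sanity check, the identity specializes correctly at $a=1/2$ to $M^{1/2}_n = L_n$, and it reproduces $M^a_2(x)=\frac{1}{2a}L_2(2ax)=8a^3x^4-8ax^2+\frac1a$.)

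With the identity in hand the conclusion is immediate: $x_0$ is a zero of $M^a_n$ if and only if $2a x_0$ is a zero of $L_n$. By property (b) recalled in the Preliminaries (see (\ref{eq:prop2a})), for $n\geq 1$ the $2^n$ zeros of $L_n$ are exactly the numbers $\pm\sqrt{2\pm\sqrt{2\pm\sqrt{2\pm\cdots\pm\sqrt{2}}}}$, and they are symmetric about the origin; dividing each of them by $2a$ yields precisely the claimed form (\ref{eq:prop2}) for the zeros of $M^a_n$, again $2^n$ of them and still symmetric about the origin.

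I do not expect a genuine obstacle here: the whole content is the conjugacy identity, and that is a two-line induction. The only points needing a little care are bookkeeping ones — verifying that the correct rescaling is $2a$ (rather than, say, $a$ or $\sqrt a$), which is forced by matching $M^a_1(x)=2ax^2-\frac1a$ against $\frac{1}{2a}L_1(2ax)=\frac{1}{2a}\bigl((2ax)^2-2\bigr)$, and making sure one invokes the full statement of property (b) for $L_n$ rather than re-deriving the nested-radical form from scratch. A direct induction on the nested-radical expression for the zeros of $M^a_n$ is possible, but it would essentially reprove property (b), so routing through $L_n$ is the economical choice; it also makes transparent why, as announced, the $\pi$-formulas of the previous subsections carry over verbatim to $M^a_n$.
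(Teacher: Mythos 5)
Your proof is correct, but it takes a genuinely different route from the paper's. The paper argues by direct induction on the nested-radical expression itself: assuming the zeros of $M^a_n$ have the stated form, it solves $2ax^2-\frac1a=\pm\frac{1}{2a}\sqrt{2\pm\cdots}$ for $x$, obtaining $x=\pm\frac{1}{2a}\sqrt{2\pm\sqrt{2\pm\cdots}}$ after pulling the constant inside the outer radical; in other words it re-derives the radical structure from scratch for $M^a_n$, exactly the "direct induction" you mention and set aside. You instead prove the conjugacy $M^a_n(x)=\frac{1}{2a}L_n(2ax)$ by a two-line induction on the defining recursions and then transport property (b) for $L_n$ through the bijection $x\mapsto 2ax$ (invertible since $a>0$). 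Your computation checks out ($\frac{1}{2a}L_1(2ax)=2ax^2-\frac1a$, and the inductive step $2a\cdot\frac{1}{4a^2}L_n(2ax)^2-\frac1a=\frac{1}{2a}(L_n(2ax)^2-2)$ is exact), and the sanity checks against $a=1/2$ and against $M^a_2$ are both right. What your approach buys is economy and transparency: it avoids repeating the radical manipulation, automatically yields the count $2^n$ and the symmetry of the zeros, and makes immediate the paper's subsequent claim that the positive zeros of $M^a_n$ are $\frac{1}{2a}\sqrt{\omega(g_{n-1,\cdot})}$ in Gray-code order, hence that all the $\pi$-formulas carry over. What the paper's version buys is self-containedness: it exhibits concretely how the extra factor $\frac{1}{2a}$ is absorbed when one "places the constant under the radical sign," which is the only nontrivial algebraic point in either argument. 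Both proofs are valid; yours is the cleaner of the two.
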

\begin{proof}
It is obvious that at $n=1$ this statement is valid.

Now assume that the (\ref{eq:prop2}) is valid for $n$. We have to prove that it is valid for $n+1$.

\begin{equation}
\label{eq:dim_prop2}
2a x^{2}-\frac{1}{a}= \pm \frac{1}{2a}\cdot \sqrt{2\pm\sqrt{2\pm\sqrt{2\pm\sqrt{2\pm...\pm\sqrt{2}}}}}
\end{equation}
or
\begin{equation}
\label{eq:dim_prop2_next}
x^{2}=\frac{1}{2a^{2}} \pm \frac{1}{4a^{2}}\cdot \sqrt{2\pm\sqrt{2\pm\sqrt{2\pm\sqrt{2\pm...\pm\sqrt{2}}}}}
\end{equation}
and placing under the radical sign
\begin{equation}
\label{eq:dim_prop2_next_next}
x=\pm \sqrt{\frac{1}{2a^{2}} \pm \frac{1}{4a^{2}}\cdot \sqrt{2\pm\sqrt{2\pm\sqrt{2\pm\sqrt{2\pm...\pm\sqrt{2}}}}}}
\end{equation}
the thesis is obtained.
\end{proof}
It is possible to prove that zeros of the map $M^a_{n+1}$ are related to those of $M^a_{n}$, $n\geq 1$.

\subsection{$\pi$-formulas: not only approximations.}

From (\ref{theta_value}) and (\ref{Ln_value}) we obtained \cite{Vel} the following formula:
\begin{equation}
\label{eq:rif1}
L_{n}(x)=2\cos\left[2^{n-1}\arctan\left(\frac{\sqrt{1-\left(\frac{x^{2}}{2}-1\right)^2}}{\frac{x^{2}}{2}-1} \right)\right]\, ,
\end{equation}
valid for $x\in[-2,2]$ and $x\neq \pm \sqrt{2}$. This expression is equivalent to
\begin{equation}
\label{eq:rif2}
L_{n}(x)=\left(\left(\left(x^{2}-2\right)^{2}-2\right)^{2}\ ...\ -2\right)^{2}-2
\end{equation}

Moreover, we already observed that, for $|x| = \sqrt{2}$, we have
\begin{equation}
L_0(\sqrt{2}) = \sqrt{2} \quad ; \quad L_1(\sqrt{2}) = 0 \quad ; \quad L_2(\sqrt{2}) = -2 \quad ; \quad L_n(\sqrt{2}) = 2 \ \ \forall n \geq 3  \ .
\end{equation}

The right hand side of (\ref{eq:rif1}) vanishes when
\begin{equation}
\label{eq:atan}
2^{n-1}\arctan\left[\frac{\sqrt{1-\left(\frac{x^{2}}{2}-1\right)^2}}{\frac{x^{2}}{2}-1} \right]=\pm\frac{\pi}{2}\ (2h+1) \ ; \ h \in N \ ; \ x \neq \pm \sqrt{2}
\end{equation}
i.e.,
\begin{equation}
- \frac{\pi}{2} < \arctan\left[\frac{\sqrt{1-\left(\frac{x^{2}}{2}-1\right)^2}}{\frac{x^{2}}{2}-1} \right]=\pm\frac{\pi}{2^{n}}\ (2h+1) < \frac{\pi}{2} \quad , \quad x \neq \pm \sqrt{2}
\end{equation}
whence
\begin{equation}
\label{hmax}
\sqrt{1-\left(\frac{x^{2}}{2}-1\right)^2}= \left(\frac{x^{2}}{2}-1\right) T_{n,h}
\end{equation}
where $T_{n,h}=\tan\left[\pm\frac{\pi}{2^{n}}\ (2h+1) \right]$, for $h=0,1,2..., h_{max}$, and $h_{max}$ defined in this way: from (\ref{eq:atan}) and boundedness of  inverse tangent function we have
$$\frac{\pi}{2^n}(2h+1) < \frac{\pi}{2}$$
from which
$$h < 2^{n-2}-\frac{1}{2}$$
therefore $h_{max}=2^{n-2}-1$, for $n \geq 2$.

If the factor $T_{n,h}$ is negative, the solutions  of (\ref{hmax}) belong to the interval $ (-\sqrt{2},\sqrt{2})$; otherwise $x\in [-2,-\sqrt{2}) \ \cup \ (\sqrt{2},2]$, if $T_{n,h}>0$. We have:
\begin{equation}
1-\left(\frac{x^{2}}{2}-1\right)^2= \ \left(\frac{x^{2}}{2}-1\right)^{2} T_{n,h}^{2} \ \Rightarrow \ \frac{x^{2}}{2}-1=\pm \frac{1}{\sqrt{1+T_{n,h}^{2}}}
\end{equation}
Therefore we can write the zeros of $L_n$ in the form
\begin{equation}
\label{eq:uu}
x^n_h=\pm\sqrt{2\pm \frac{2}{\sqrt{1+\tan^{2}\left[\frac{\pi}{2^{n}}\ (2h+1) \right]}}} \quad , \quad n \geq 2\ ; \ 0\leq h\leq 2^{n-2}-1
\end{equation}
Moreover, we know that, for every $n \geq 2$, the $h$-th positive zero of $L_{n}(x)$ has the form:
\begin{equation}
\sqrt{\omega(g_{n-1,2^{n-1}-h})}
\end{equation}
where $0\leq h\leq 2^{n-2}-1$. Equating the two expressions, one finds:

\begin{equation}
\frac{1}{1+\tan^{2}\left[\frac{\pi}{2^{n}}\ (2h+1) \right]}=\left[\frac{1}{2}\omega(g_{n-1,2^{n-1}-h})-1\right]^{2}
\end{equation}
whence
\begin{equation}
\label{eq:exact}
\pi=\frac{2^{n}}{2h+1}\arctan\sqrt{\frac{1}{\left[\frac{1}{2}\omega(g_{n-1,2^{n-1}-h})-1\right]^{2}}-1} \, , 
\end{equation}
for $n \geq 2$, $0\leq h\leq 2^{n-2}-1$. In this way we obtain infinitely many formulas giving $\pi$ not as the limit of a sequence, but through an equality involving the zeros of the polynomials $L_n$.% which is true for every choice of $n$ and $h$ as in (\ref{eq:uu}).

Similar considerations can be made for the polynomials $M^a_n$.
Since, for $\displaystyle |x| \neq \frac{\sqrt{2}}{2a}$,

\begin{equation}
M^a_{n}(x)=\frac{1}{a}\cos\left(2^{n-1}\arctan\left[\frac{\sqrt{1-\left(2a^{2}x^{2}-1\right)^2}}{2a^{2}x^{2}-1} \right]\right)
\end{equation}
vanishes if
\begin{equation}
2^{n-1}\arctan\left[\frac{\sqrt{1-\left(2a^{2}x^{2}-1\right)^2}}{2a^{2}x^{2}-1} \right]=\pm\frac{\pi}{2}\ (2h+1)
\end{equation}
i.e.,
\begin{equation}
\arctan\left[\frac{\sqrt{1-\left(2a^{2}x^{2}-1\right)^2}}{2a^{2}x^{2}-1} \right]=\pm\frac{\pi}{2^{n}}\ (2h+1) \ ,
\end{equation}
then
\begin{equation}
\label{diseguaglianza_con_Tnh}
\sqrt{1-\left(2a^{2}x^{2}-1\right)^2}= \left(2a^{2}x^{2}-1\right) T_{n,h}
\end{equation}
where $T_{n,h}=\tan\left[\pm\frac{\pi}{2^{n}}\ (2h+1) \right]$, with $h=0,1,2..., 2^{n-2}-1$.

Furthermore:
\begin{equation}
\left(2a^{2}x^{2}-1\right) \ T_{n,h} >0
\end{equation}
The inequality $2a^{2}x^{2}-1>0$ is verified for $x<-\frac{\sqrt{2}}{2a}\ \vee \ x>\frac{\sqrt{2}}{2a}$. If $T_{n,h}$ is negative, the solutions of (\ref{diseguaglianza_con_Tnh}) belong to the interval $ \left(-\frac{\sqrt{2}}{2a},\frac{\sqrt{2}}{2a}\right)$, otherwise $x\in \left[-\frac{1}{a},-\frac{\sqrt{2}}{2a}\right) \ \cup \ \left(\frac{\sqrt{2}}{2a},\frac{1}{a}\right]$, if $T_{n,h}>0$. On the other hand:
\begin{equation}
1-\left(2a^{2}x^{2}-1\right)^2=T_{n,h}^{2}\ \left(2a^{2}x^{2}-1\right)^{2}\ \Rightarrow \ 2a^{2}x^{2}-1=\pm \frac{1}{\sqrt{1+T_{n,h}^{2}}}
\end{equation}
from which:
\begin{equation}
x_h^n=\pm\frac{1}{2a}\sqrt{2\pm \frac{2}{\sqrt{1+\tan^{2}\left[\frac{\pi}{2^{n}}\ (2h+1) \right]}}}
\end{equation}
Since, from (\ref{eq:prop2}), the zeros of $M^a_{n}(x)$ are proportional to the zeros of $L_{n}(x)$, we can say that also \emph{the $2^{n-1}$ positive zeros of $M^a_{n}$, in decreasing order, follow the order given by the Gray code}:
\begin{equation}
\frac{1}{2a}\sqrt{\omega(g_{n-1,2^{n-1}-h})}
\end{equation}
Equating the two expressions we find again the identity:
\begin{equation}
\pi=\frac{2^{n}}{2h+1}\arctan\sqrt{\frac{1}{\left[\frac{1}{2}\omega(g_{n-1,2^{n-1}-h})-1\right]^{2}}-1}
\end{equation}

\section{Discussion and perspectives.}
\label{sec:conc}

In previous papers (\cite{Vel} and \cite{Vel2}) we introduced a class of polynomials which follow the same recursive formula as the Lucas-Lehmer numbers, studying the distribution of their zeros and remarking that this distributions follows a sequence related to the binary Gray code. It allowed us to give an order for all the zeros of every polynomial $L_n$, \cite{Vel2}. In this paper, the zeros, expressed in terms of nested radicals, are used to obtain two formulas for $\pi$: the first (i.e., formula (\ref{eq:theorem_gray})) can be seen as a generalization of the known formula (\ref{eq:Munkhammar}), because the latter can be seen as the case related to the smallest positive zero of $L_n$; the second (i.e., formula (\ref{eq:exact})) gives infinitely many formulas reproducing $\pi$ not as the limit of a sequence, but through an equality involving the zeros of the polynomials $L_n$.% We also introduce two relationships between $\pi$ and the golden ratio $\varphi$: (\ref{eq:k_ugual_2_inversa_radice_divide_e_somma}) and (\ref{eq:golden2}).

The proof of the $\pi$-formulas is based on Proposition \ref{prop:terza}. Actually, Proposition \ref{prop:terza} can be fundamental for further studies, too. In fact, it not only allows to get the main results of this paper, but also allows the evaluation of nested square roots of 2 as:
\begin{equation*}
\sqrt{\omega (\bar b_{n-m} g_{m,h+1})}    =\sqrt{2-\sqrt{2+\sqrt{2+\dots+\sqrt{2\pm\sqrt{2\pm\dots\pm\sqrt{2}}}}}}
\end{equation*}
where $\bar b_{n-m}=10\dots0$ has $n-m-1$ zeros, for every $h\in\mathbb N$ such that $h\in\{0,1,\dots,2^m-1\}$ and $n>m+1$. This is a result to put in evidence and to generalize in future researches, for example following interesting insights suggested by paper \cite{Zim08}, where the authors defined the set $S_2$ of all continued radicals of the form
$$a_0\sqrt{2+a_1\sqrt{2+a_2\sqrt{2+a_3\sqrt{2+\dots}}}}$$
(with $a_0 = 1$, $a_k\in\{-1,1\}$ for $k=0,1,\dots,n-1$) and investigated some of its properties by assuming that the limit of the sequence of radicals exists.

%Finally, finding the roots of orthogonal polynomials has important applications in many fields of engineering, e.g. in the analysis of dispersion relations \cite{cesarano2014humbert} or in numerical simulations of non-linear problems \cite{cesarano2014operational}. Our future research will also address the application of Lucas-Lehmer polynomials properties, mainly for what concerns their roots distribution to similar engineering problems.

\bibliographystyle{te}
\bibliography{pi}

\end{document}